\documentclass[11pt,  A4paper]{amsart}
\usepackage{amsfonts,amssymb,amsmath,amsthm,amscd,mathtools,multicol,tikz, tikz-cd,caption,enumerate,mathrsfs,geometry,tgpagella}
\usetikzlibrary{arrows,chains,matrix,positioning,scopes}
\usepackage{inputenc}
\usepackage[foot]{amsaddr}

\usepackage[pagebackref=true, colorlinks, linkcolor=blue, citecolor=red]{hyperref}
\usepackage{fullpage}
\usepackage[all]{xy}


\geometry{
	body={6.8in, 9in},
	left=1in,
	top=1in
}
\parindent 0in
\parskip .1in

\setlength{\footskip}{20pt}

\makeindex

\newtheorem{theorem}{Theorem}[section]
\newtheorem{proposition}[theorem]{Proposition}
\newtheorem*{theorem*}{Theorem}
\newtheorem{corollary}[theorem]{Corollary}
\newtheorem{lemma}[theorem]{Lemma}
\newtheorem{remark}[theorem]{Remark}
\theoremstyle{remark}
\newtheorem{example}[theorem]{Example}

\numberwithin{equation}{section}

\newcommand{\bt}{\begin{theorem}}
	\newcommand{\et}{\end{theorem}}
\newcommand{\bts}{\begin{theorem*}}
	\newcommand{\ets}{\end{theorem*}}
\newcommand{\bco}{\begin{corollary}}
	\newcommand{\eco}{\end{corollary}}
\newcommand{\bd}{\begin{definition}}
	\newcommand{\ed}{\end{definition}}
\newcommand{\bp}{\begin{problem}}
	\newcommand{\ep}{\end{problem}}
\newcommand{\bl}{\begin{lemma}}
	\newcommand{\el}{\end{lemma}}
\newcommand{\bprop}{\begin{proposition}}
	\newcommand{\eprop}{\end{proposition}}
\newcommand{\br}{\begin{remark}}
	\newcommand{\er}{\end{remark}}
\newcommand{\bpf}{\begin{proof}}
	\newcommand{\epf}{\end{proof}}
\newcommand{\bex}{\begin{example}}
	\newcommand{\eex}{\end{example}}



\newcommand{\Z}{\mathbb{Z}}
\newcommand{\Q}{\mathbb{Q}}

\newcommand{\gal}{\mathscr{G}}


\begin{document}
	
	\title{Liouville's Theorem on integration in finite terms for $\mathrm D_\infty,$ $ \mathrm{SL}_2$  and Weierstrass field extensions}
	\author{Partha Kumbhakar \and Varadharaj R. Srinivasan}
	\email{parthakumbhakar@iisermohali.ac.in, \ \ ravisri@iisermohali.ac.in}
	\address{Indian Institute of Science Education and Research (IISER) Mohali, Knowledge City,  Sector 81, S.A.S. Nagar 140306, Punjab, India}

	\maketitle
	
	\begin{abstract} Let $k$ be a differential field of characteristic zero  and the field of constants $C$ of $k$ be an algebraically closed field.  Let $E$ be a differential field extension of $k$ having $C$ as its field of constants and that $E=E_m\supseteq E_{m-1}\supseteq\cdots\supseteq E_1\supseteq E_0=k,$ where $E_i$ is either an elementary extension of $E_{i-1}$ or $E_i=E_{i-1}(t_i, t'_i)$ and $t_i$ is weierstrassian (in the sense of Kolchin \cite[Page 803]{Kolchin1953}) over $E_{i-1}$ or $E_i$ is a Picard-Vessiot extension of $E_{i-1}$ having a differential Galois group isomorphic to either the special linear group $\mathrm{SL}_2(C)$ or the infinite dihedral subgroup $\mathrm{D}_\infty$ of $\mathrm{SL}_2(C).$ In this article,  we prove that Liouville's theorem on integration in finite terms \cite[Theorem]{Rosenlicht1968} holds for $E$. That is,  if $\eta\in E$ and $\eta'\in k$ then there is a positive integer $n$ and for $i=1,2,\dots,n,$ there are elements $c_i\in C,$ $u_i\in k\setminus \{0\}$ and $v\in k$ such that $$\eta'=\sum^n_{i=1}c_i\frac{u'_i}{u_i}+v'.$$ 
	\end{abstract}

	\section{Introduction} 
	Let $k$ be a differential field of characteristic zero with the derivation $x\mapsto x'$ and $C_k:=\{x\in k\ | \ x'=0\}$ be the field of constants of $k.$ Let $E$ be a differential field extension of $k.$  An element $\theta\in E$ is said to be \emph{elementary} (respectively, \emph{liouvillian}) over $k$ if either $\theta$ is algebraic over $k$ or for some $\alpha\in k\setminus\{0\},$ $\theta'=\alpha'/\alpha$ (respectively, $\theta'=\alpha$) or   for some $\alpha\in k,$ $\theta'/\theta=\alpha'$ (respectively, $\theta'/\theta=\alpha$).  
	A differential field extension $E$ of $k$ is called  an \emph{elementary field extension} (respectively, a \emph{liouvillian field extension})  of $k$  if  there is a tower of differential fields
	\begin{equation*}
		k=E_0\subseteq E_1\subseteq \cdots\subseteq E_m=E
	\end{equation*}
	such that for each $i=1,\dots, m,$ we have $E_i=E_{i-1}(\theta_i)$ for some $\theta_i$ elementary (respectively, liouvillian) over $k.$

An element $f\in k$ is said to have an \emph{elementary integral} over $k$ if there are constants $c_1,\dots, c_n\in C_k$ and nonzero elements $u_1,\dots, u_n\in k$ and an element $v\in k$ such that 	
\begin{equation*}\label{elementaryexpression} c_1u'_1/u_1+\cdots+c_nu'_n/u_n+v'=f.\end{equation*} 
Observe that if $f\in k$ has an elementary integral over $k$  then one can construct an elementary extension field $E$ such that $E=k(x_1,\dots,x_n)$ and that for each $i,$ $x'_i=u'_i/u_i.$ Here, one can think $x_i$  as ``$\log(u_i)$" and that $\int f=\sum^n_{i=1}c_i\log(u_i)+v.$

In \cite{Rosenlicht1968}, Rosenlicht proved the following theorem, which is known as Liouville's Theorem: Let $k$ be a differential field of characteristic zero and $E$ be an elementary extension of $k$ with $C_E=C_k$. Suppose that there is an element $f \in k$ 
having an elementary integral over $E.$ Then $f$ has an elementary integral over $k.$ It is then natural to ask for an extension of  Liouville's theorem when the differential field $E$ is built up  by successive adjunction of elements that are not necessarily elementary over the predecessor field.  In fact, there are several such extensions of Liouville's theorem available.  For example, in  \cite{SSK1985}, \cite{Badd2006}, \cite{Hebisch2018} and \cite{YKVRS2019} the nonelementary adjunctions considered are by indefinite integrals  such as error functions,  logarithmic integrals and polylogarithmic integrals. 

In this article, we prove yet another extension of Liouville's theorem. The differential field extensions  we consider are obtained by successive adjunction of elements that are either elementary or  solutions of certain second order differential equations or solutions of Weierstrass differential equations, over the predecessor field. The precise statement of our theorem is as follows.
	\begin{theorem}\label{IFT-elliptic}\label{IFT-euler}
		Let $k$ be a differential field of characteristic zero and $C_k$ be an algebraically closed field. Let $$E=E_m\supseteq E_{m-1}\supseteq \cdots\supseteq E_1\supseteq E_0=k$$ be a chain of differential fields such that $C_E=C_k$ and for each $i=1,2,\dots, m,$ $E_i$ is of one of the following types:
		\begin{enumerate}[(i)]
		\item\label{elementary} $E_i$ is an elementary extension of $E_{i-1}.$\\
		\item\label{eulerian}  $E_i$ is a Picard-Vessiot extension of $E_{i-1}$ having a differential Galois group isomorphic to $\mathrm{SL}_2(C_k)$ or to the dihedral subgroup $\mathrm{D}_\infty$ of $\mathrm{SL}_2(C_k)$  $$\mathrm{D}_\infty:=\left\{\begin{pmatrix}a&0\\0&a^{-1}\end{pmatrix}\ \Big| \ a\in C_k\setminus \{0\}\right\}\bigcup \left\{\begin{pmatrix}0&a\\ -a^{-1}&0\end{pmatrix}\ \Big| \ a\in C_k\setminus \{0\}\right\}.$$  
		\item \label{weierstrassianelement}$E_i=E_{i-1}(\theta, \theta'),$ where $\theta$ is \emph{weierstrassian}\footnote{Definition is due to Kolchin \cite[p. 803]{Kolchin1953}} over $E_{i-1}.$  That is,  $\theta$ is transcendental over $E_{i-1}$  and there are a polynomial $P(X)=4X^3-g_1X-g_0\in C_k[X]$ with a non-zero discriminant\footnote{That is, $27g_0^2-g^3_1\neq 0,$ which then implies that the Weierstrass elliptic curve $ZY^2-4X^3+g_1Z^2X+g_0Z^3$  is non-singular.} and a nonzero element $\alpha\in E_{i-1}$ such that   $$\theta'^2=\alpha^2 P(\theta)\quad(\emph{Weierstrass differential equation}).$$ 
		\end{enumerate}
		
		Suppose that $f\in k$ has an elementary integral over $E.$ Then $f$ has  an elementary integral over $k.$ 
	\end{theorem}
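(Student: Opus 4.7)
The plan is to proceed by induction on the length $m$ of the tower. The case $m=0$ is trivial, and since $C_{E_i}=C_k$ throughout, the inductive step reduces immediately to a one-step descent lemma: whenever $F\subseteq F'$ is of type (i), (ii) or (iii) and $f\in F$ admits an elementary integral over $F'$, then $f$ already admits one over $F$. Applied at the top of the tower this yields an elementary integral over $E_{m-1}$, and the inductive hypothesis on the tower $k\subseteq E_1\subseteq\cdots\subseteq E_{m-1}$ of length $m-1$ then delivers one over $k$. The case of a type (i) extension is precisely Rosenlicht's theorem \cite{Rosenlicht1968} applied to $F\subseteq F'$, so the substantive work consists of the descent lemmas for types (ii) and (iii).

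For type (ii), $F'/F$ is a Picard-Vessiot extension with differential Galois group $G$ equal to $\mathrm{SL}_2(C_k)$ or $\mathrm{D}_\infty$. My approach rests on two structural inputs about these groups: their algebraic character groups are minimal---$\mathrm{SL}_2(C_k)$ is perfect and has none, while $\mathrm{D}_\infty$ has abelianization of order two with the sign character as unique nontrivial character---and neither admits a nontrivial algebraic homomorphism to $(C_k,+)$. Given $f=\sum c_iu_i'/u_i+v'$ with $u_i\in F'\setminus\{0\}$, $v\in F'$, and $f\in F$, applying $\sigma\in G$ and subtracting yields $\sum c_i(\sigma(u_i)/u_i)'/(\sigma(u_i)/u_i)+(\sigma(v)-v)'=0$ inside $F'$. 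One then exploits this family of identities, together with a Rosenlicht-style $\mathbb{Q}$-linear independence reduction on the $c_i$ and a pole analysis of the $u_i$ on the torsor $\operatorname{Spec}(F'\otimes_FF')$, to normalize the sum so that each $u_i'/u_i$ and $v'$ individually lies in $F$. Once this is achieved, the character-theoretic vanishing forces $u_i\in F^{*}$ (with the single wrinkle that in the $\mathrm{D}_\infty$ case a given $u_i$ may only satisfy $u_i^{2}\in F^{*}$, but the identity $c_iu_i'/u_i=(c_i/2)(u_i^{2})'/(u_i^{2})$ puts the term back into admissible form), and the additive-cohomology vanishing forces $v\in F$.

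For type (iii), $F'=F(\theta,\theta')$ is the function field over $F$ of the elliptic curve $Y^{2}=P(X)$ with $P(X)=4X^{3}-g_1X-g_0$ of nonzero discriminant. Every element of $F'$ admits a unique expression $A(\theta)+B(\theta)\theta'$ with $A,B\in F(\theta)$, and the Weierstrass relation $\theta'^{2}=\alpha^{2}P(\theta)$ makes the derivation on $F'$ entirely explicit. The approach is to expand each $u_i$ and $v$ in this form together with a partial-fraction decomposition in $\theta$, substitute into $f=\sum c_iu_i'/u_i+v'$, and extract identities by matching polar contributions at each closed point of the elliptic curve---both the finite closed points and the point at infinity. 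Because $f\in F$, all such polar contributions must cancel, and the resulting constraints on residues and leading coefficients should force each $u_i$ (after regrouping) to come from $F$ and $v$ to come from $F$ up to an adjustment absorbable into the logarithmic-derivative terms. I expect this type (iii) descent together with the $\mathrm{SL}_2$ subcase of (ii) to be the main technical obstacles: the former because the elliptic structure introduces two structurally different pole families (finite places versus the point at infinity) that must be disentangled cleanly, in the spirit of Rosenlicht's partial-fraction argument but now on a curve of genus one; the latter because the Galois group is infinite, connected, and non-abelian, ruling out finite averaging and requiring the representation-theoretic and cohomological inputs above.
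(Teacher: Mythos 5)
Your global skeleton (induction on $m$ plus a one-step descent lemma for each of the three extension types, with Rosenlicht's theorem disposing of type (i)) is exactly the paper's. The problem is that for types (ii) and (iii) the steps you defer are the entire content of the proof, and the tools you name do not obviously supply them. For type (ii) your whole argument funnels through the claim that one can ``normalize the sum so that each $u_i'/u_i$ and $v'$ individually lies in $F$,'' after which characters and additive cohomology finish. That normalization is the theorem, not a preliminary. In the $\mathrm{D}_\infty$ case the extension has the form $F\subsetneq F(\alpha)\subsetneq F(\alpha)(\eta)$ with $[F(\alpha):F]=2$ and $\eta'/\eta=\alpha$; after Rosenlicht's lemma one has $u_i=a_i\eta^{m_i}$, so $u_i'/u_i=a_i'/a_i+m_i\alpha$ genuinely involves $\alpha\notin F$, and the subtracted identity $\sum c_i(\sigma(u_i)/u_i)'/(\sigma(u_i)/u_i)+(\sigma(v)-v)'=0$ does not force $\sigma(u_i)/u_i$ to be constant (restricted to the identity component it is the nontrivial character $c_\sigma\mapsto c_\sigma^{m_i}$). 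The descent is achieved in the paper by applying the trace and norm over the quadratic subfield, and it hinges on the structural fact that $\mathrm{tr}(\alpha)=\tfrac12\,\gamma'/\gamma$ for some $\gamma\in F$ (because the product of two independent solutions has square in $F$); without this the residual term $(\sum m_ic_i)\,\mathrm{tr}(\alpha)$ cannot be put into admissible form over $F$. In the $\mathrm{SL}_2$ case the transcendence degree is $3$, so no one-variable partial-fraction or residue device applies directly; the paper must first peel off an algebraic layer, an antiderivative layer and an exponential layer, and then run a Laurent-series order computation in the Riccati solution $\alpha$ about points $a$ of $\overline{F}$, whose punchline is that the Riccati polynomial has no zero in $\overline{F}$ because $\mathrm{SL}_2$ is simple. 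None of this is visible in your sketch, and ``pole analysis on $\operatorname{Spec}(F'\otimes_F F')$'' (a three-dimensional torsor) is not a substitute.

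For type (iii) your plan of matching polar contributions of a partial-fraction expansion at the closed points of the elliptic curve faces the standard obstruction that the coefficients of that expansion lie in the differential field $F$ and are themselves differentiated, so the poles of $u_i'/u_i$ and $v'$ are not controlled by a fixed divisor in the way a genus-zero Rosenlicht argument would suggest; you do not say how to handle this. The paper avoids the issue entirely: it lets the infinite translation group $\mathcal E(C)$ act as differential automorphisms of $\overline{k}(\theta,\theta')$, invokes Rosenlicht's two propositions on differentials (linear dependence over $C$ of the associated differentials, and the criterion for a differential $\sum c_i\,\mathrm{d}u_i/u_i+\mathrm{d}v$ to vanish) to deduce relations $\tau(u_j^{l_\tau})=f_{j\tau}\prod_i u_i^{m_{ij\tau}}$ with $f_{j\tau}\in\overline{k}$, and then derives a contradiction by observing that the poles of the translates $\tau(u_j)$ form an infinite set while the right-hand side has poles in a fixed finite set; finally it uses that $k$ is algebraically closed in $k(\theta,\theta')$. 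So while your correct observations about the character groups of $\mathrm{SL}_2$ and $\mathrm{D}_\infty$ would indeed close the argument once everything is reduced to logarithmic derivatives lying in $F$, the reductions themselves---the trace identity for $\mathrm{D}_\infty$, the Riccati/Laurent analysis for $\mathrm{SL}_2$, and the translation-orbit argument for weierstrassian elements---are missing, and they are where the proof actually lives.
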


 What is special about our theorem is that our extension field $E,$ unlike the extension fields considered in \cite{SSK1985}, \cite{Badd2006}, \cite{Hebisch2018},  need not be a liouvillian extension field:  we  allow  adjuction of weierstrassian elements and solutions of second order differential equations with Galois groups  isomorphic to $\mathrm{SL}_2(C_k).$ 
 
In the manuscript \cite{Hebisch21}, Hebisch considered  \emph{elliptic-Lambert} field extensions, which are obtained by a repeated adjunction of  elliptic functions, Lambert functions and elliptic integrals. Wherein, for the field extensions obtained by a  repeated adjunction of  elementary and elliptic functions, he proved that Liouville's theorem holds.  His definition of an elliptic function $\eta$ over a field $k$ is  that   \begin{equation}\label{hebisch-elliptic} \eta'^2=\beta'^2 P(\eta),\end{equation} where $\beta\in k$ and  $P(X)=X^3-g_1X-g_0\in C_k[X].$ 
 In  Theorem \ref{IFT-elliptic} (\ref{weierstrassianelement}), the element $\alpha\in E_{i-1}$ that appears in the Weierstrass differential equation is arbitrary and in particular, $\alpha$ need be of the form $\beta'$ for any $\beta\in E$ or for any $\beta$ from an elliptic-Lambert field extension of $E_{i-1}.$   Therefore, weierstrassian elements at large are not covered in \cite{Hebisch21}. However, if $\eta$ is an elliptic function, as in Equation (\ref{hebisch-elliptic}), over $E_{i-1}$  then it is clearly  weierstrassian over $E_{i-1}$.  Thus, the extension field $E$ considered in our theorem does include the adjunction of Hebisch's elliptic functions.

	\section{Preliminaries and Basic Results} \label{prelims}
	
In this paper,  by a differential field, we  mean a field of characteristic zero with a single derivation map. We fix a differential field $k$ and assume that the field of constants $C$ of $k$ is an algebraically closed field.  We shall now record few basic results from differential algebra that are needed in our proofs.
	
	 \subsection{Second order differential equations}
	 Let $E$ be a Picard-Vessiot extension of $k$ for a differential $k-$module $M.$ Suppose that the differential Galois group $\gal(E/k)$ is  isomorphic to a closed subgroup of $\mathrm{SL}_2(C)$.  Then $\gal(E|k)$ is isomorphic to  one of the following groups \cite[Page 7, Lemma]{Kovacic1986}.
	 
	  	\begin{enumerate}[(i)]	
	  			  	\item\label{finitesubgroup} a finite group.
	  
	  	\item\label{dihedralsubgroup}  the infinite dihedral subgroup of $\mathrm{SL}_2(C);$ $$\mathrm{D}_\infty:=\left\{\begin{pmatrix}a&0\\0&a^{-1}\end{pmatrix}\ \Big| \ a\in C\setminus \{0\}\right\}\bigcup \left\{\begin{pmatrix}0&a\\ -a^{-1}&0\end{pmatrix}\ \Big| \ a\in C\setminus \{0\}\right\}.$$  

	  	\item\label{sl2case} $\mathrm{SL}_2(C).$
	  	
	  		\item \label{borel}  a closed subgroup of a Borel subgroup  of $\mathrm{SL}_2(C).$ 
	  \end{enumerate}
	 
	 In the next two propositions, we explain known structure properties of Picard-Vessiot extensions whose Galois groups are isomorphic to the infinite dihedral group or the special linear group. The results obtained will be used in the proof of our main theorem. 
	 
	 \bprop \label{dihedralanalysis}
	 	Let $E$ be a Picard-Vessiot extension of $k$ with Galois group $\gal(E|k)$ isomorphic to $\mathrm{D}_\infty$ as algebraic groups. Then there is a tower of differential fields $$k\subsetneqq k(\alpha)\subsetneqq E=k(\alpha)(\eta)$$ having the following properties
	 	\begin{enumerate}[(i)] \item $k(\alpha)$ is a quadratic extension of $k$ as well as  the algebraic closure of $k$ in $E.$  	\\ 	 \item The element $\eta$ is transcendental over $k(\alpha)$ with $\eta'/\eta=\alpha.$ \\
	 		\item There is an element $\gamma\in k\setminus \{0\}$  such that the trace $$\mathrm{tr}(\alpha)=\frac{1}{2}\frac{\gamma'}{\gamma}.$$ 
	 	\end{enumerate}
\eprop

\begin{proof}
	Since $\gal(E|k)=\mathrm D_\infty\cong \mathrm G_m\rtimes \Z_2,$ the identity component $\gal(E|k)^0$ is isomorphic to $\mathrm G_m$ and the quotient group $\gal(E|k)/\gal(E|k)^0$ is isomorphic to $\Z_2.$  Therefore,  from the fundamental theorem of differential Galois theory and from \cite[Lemma A1]{Rubel-Singer1988}, there is a tower of differential fields \begin{equation*} k\subseteq k(\alpha)\subseteq E=k(\alpha)(\eta),\end{equation*} where $k(\alpha)$ is the algebraic closure of $k$ in $E,$  $k(\alpha)$ is a quadratic extension of $k,$ $\eta$ is transcendental over $k(\alpha)$ and that $\eta'/\eta\in k(\alpha)\setminus k.$ Furthermore, \begin{align*}\gal(k(\alpha)|k)& \cong \gal(E|k)/\gal(E|k)^0\cong \Z_2\\  \gal(E|k(\alpha))&\cong \gal(E|k)^0\cong \mathrm G_m.\end{align*} Since, $k\subsetneqq k(\eta'/\eta)
	 \subseteq k(\alpha)$ and that $[k(\alpha):k]= 2,$ we have $k(\eta'/\eta)=k(\alpha).$  Therefore, we may assume $\alpha=\eta'/\eta.$  Let $\alpha$ and $\beta$ be the distinct roots of the irreducible polynomial of $\alpha.$ Then, there is an automorphism $\tau\in \gal(E|k)$ such that $\tau(\alpha)=\beta.$ Let  $\tau(\eta)=:\zeta$ and observe that  $\zeta'/\zeta=\beta$  and that \begin{equation}\label{productofsolutions}\frac{(\eta\zeta)'}{\eta\zeta}=\alpha+\beta\in k.\end{equation} 
	
	Since $\gal(E|k)\cong \mathrm D_\infty$ has no closed normal subgroup $N$ such that $\mathrm D_\infty/N \cong \mathrm G_m,$ it follows that $\eta\zeta$ is not transcendental over $k.$ Thus $\eta\zeta$ belongs 
	to the quadratic extension $k(\alpha).$   Now from Equation (\ref{productofsolutions}) and from \cite[Remark 1.11.1]{Magid1994}, we obtain $(\eta\zeta)^2\in k.$  Let $\gamma:=(\eta\zeta)^2$ and observe that
	\begin{equation*} \frac{(\eta\zeta)'}{\eta\zeta}=\alpha+\beta=\mathrm{tr}(\alpha)=\frac{1}{2}\frac{\gamma'}{\gamma}.
		\end{equation*}	\end{proof}
	
	\bprop \label{SLanalysis} \cite[page 58]{Magid1994}
	Let $E$ be a Picard-Vessiot extension of $k$ with Galois group $\gal(E|k)$ isomorphic to $\mathrm{SL}_2(C)$ as algebraic groups. Then we have the following:
	\begin{enumerate}[(i)]
		\item\label{existencesecondorder} $E$ is a Picard-Vessiot extension of $k$ for a matrix differential equation  $y'=Ay,$ where  $$A=\begin{pmatrix}0&1\\r &s\end{pmatrix}\in M_2(k).$$
		\item\label{structureSL2} there is a tower of differential fields $$k\subsetneqq k(\alpha)\subsetneqq k(\alpha, \xi)\subsetneqq k(\alpha, \xi,\eta)=E,$$ 
where $\alpha, \xi$ and $\eta$ are $k-$algebraically independent,
$\eta'=\omega/\xi^2$ for some $\omega\in k\setminus \{0\},$  $\xi'=\alpha \xi$ and that $\alpha$ is a zero of the Riccati differential polynomial $R(X)=X'+X^2-rX-s.$ \\
\item \label{noricattisolnink} $R$ has no zeros in $k.$
	\end{enumerate}
	\eprop
	
\begin{proof} Let $R$ be the Picard-Vessiot ring of $E.$  Then by \cite[Corollaries 5.17 and 5.29]{Magid1994}, there is an isomorphism of $k-$algebras 
	\begin{equation*}\phi: R\to k\otimes_C C[\gal(E|k)], \end{equation*}
	where $C[\gal(E|k)]$ is the coordinate ring of $\gal(E|k),$  which  is also compatible with the 
action of $\gal(E|k).$  Since $$C[\gal(E|k)]=\frac{C[x_{11},x_{12}, x_{21},x_{22}]}{\langle x_{11}x_{22}-x_{21}x_{12}-1\rangle},$$ there are elements $y_1, y_2, y_3,y_4\in R$ (namely, the images of $x_{ij}$ under $\phi^{-1}$) that generates $R$  as a $k-$algebra.  Let $$Y:=\begin{pmatrix}y_1 &y_2\\ y_3 & y_4\end{pmatrix}\qquad W:=\begin{pmatrix}y_1 &y_2\\ y'_1 & y'_2\end{pmatrix}.$$ Then for any $\sigma\in \gal(E|k),$ we have \begin{equation}\label{sigmaY}\sigma(Y)=YC_\sigma\end{equation} for some $C_\sigma:=(c_{ij\sigma})\in \mathrm{SL}_2(C).$ In particular, for $i=1,2$ and for all $\sigma\in \gal(E|k)$,  $\sigma(y_i)=c_{1i\sigma}y_1+c_{2i\sigma}y_2$  and therefore  we also have \begin{equation}\label{sigmawronsk}\sigma(W)=WC_\sigma.\end{equation}

From Equations (\ref{sigmaY}) and (\ref{sigmawronsk}), we have the following observations. First, we see that  the entries of the matrices  $W'W^{-1}$ and $YW^{-1}$ are fixed by $\gal(E|k).$ Therefore $W'=AW$ and $Y=BW,$ for some $A,B \in M_2(k).$ Next,  $\sigma(\mathrm{det} (W))=\mathrm{det} (W)$ for all $\sigma\in \gal(E|k)$ and we obtain that  $\mathrm{det} (W)\in k.$  Now since entries of $Y$ generate $R$, so does the entries of $W.$  Finally, since  $\mathrm{tr.deg} (E|k)=3,$  the set $\{y_1,y_2,y'_1\}$  must be $k-$algebraically independent.  

Let $A=(a_{ij})\in M_2(k).$ Then $y'_1=a_{11}y_1+a_{12}y'_1$ and therefore $a_{11}=0$ and $a_{12}=1.$This proves (\ref{existencesecondorder}). 

Now from the equation $y'=Ay,$ we obtain a second order differential equation \begin{equation}\label{secordeqn}z''=a_{21}z'+a_{22}z\end{equation}
to which  $y_1, y_2$ are  $k-$algebraically independent solutions.  Let $\xi:=y_1,$  $\alpha=\xi'/\xi$ and $\eta= y_2/y_1$ and observe that these choices have the desired properties listed in (\ref{structureSL2}). 

Let $V$ be the set of all solutions in $E$ of the Equation (\ref{secordeqn}) and $\mathscr R$ be the set of all zeros in $E$ of the Riccati differential polynomial $R(X)=X'+X^2-a_{21}X-a_{22}$. Then, the logarithmic derivative map $x\to x'/x$ from $V\setminus \{0\}\to \mathscr R$ is surjective. If $u\in k\cap \mathscr R$ then since $k$ is algebraically closed in $E,$  there is a $v\in V$ that is transcendental over $k$ such that $v'/v=u.$ Note that $\sigma(v)=c_\sigma v$ for some $c_\sigma\in C\setminus \{0\}.$ Therefore, the field $k(v)$ is then a differential field  that is also stable under the action of $\gal(E|k).$ But then, by fundamental theorem of differential Galois theory, the closed subgroup $\gal(E|k(v))$ must be a normal subgroup of $\gal(E|k)\cong \mathrm{SL}_2(C)$ of dimension $2$. This contradicts the fact that $\mathrm{SL}_2(C)$ is a simple algebraic group. Thus, $k$ contains no zeros of  $R.$
\end{proof}

	\subsection{Weierstrassian elements}   Let $k\subset K$ be differential fields of characteristic zero such that $C_K=C$ and that $C$ be an algebraically closed field.  An element $\theta\in K$ is said to \emph{weierstrassian} over $k$ if $\eta$ is transcendental over $k$ and there is a nonsingular irreducible projective curve of genus $1$, defined over $C$, in the Weierstrass form:  $$\mathcal E:=ZY^2-4X^3+g_1Z^2X+g_0Z^3$$ such that for some $\alpha\in k\setminus \{0\},$ $(\theta: \theta'/\alpha:1)$ is a $K-$point of the curve. That is \begin{equation*} \label{weierstrassdefn}\theta'^2=\alpha^2(4\theta^3-g_1\theta-g_0);\quad \alpha\in k\setminus\{0\}, \quad g_0, g_1\in C, \quad27g^2_0-g^3_1\neq 0.
		\end{equation*}

	 In \cite[page 809]{Kolchin1953},   it was  proved that if $K$ is a  strongly normal extension of $k$ such that  $k$ is  algebraically closed in $K$ and  the field transcendence degree tr.d$(K|k)=1$  then there is an element $\theta\in K$ such that either $K=k(\theta)$ and $\theta'\in k$ or $K=k(\theta)$ and $\theta'/\theta\in k$ or $K=k(\theta,\theta')(\zeta)$ and $\theta$ is weierstrassian over $k$ and $\zeta$ is algebraic over $k(\theta,\theta')$. 
	 
	 We need the following facts about the differential $k-$automorphism group $\gal$ of the differential field  $k(\theta, \theta').$ The $C-$rational points of $\mathcal E,$ denoted by $\mathcal E(C),$ is a commutative group.  A point $p\in \mathcal E(C)$ induces a translation isomorphism  $\tau_p: \mathcal E(C)\to \mathcal E(C)$ of curves, defined by $\tau_p(x)=x+p.$ The function field $k(\mathcal E)$ of $\mathcal E(k)$ is isomorphic to $k(\theta,\theta')$ and  the map $\tau_p,$ for $p\in \mathcal E(C),$  induces a  differential $k-$automorphism  $\tau^*_p$ of  $k(\theta,\theta').$  Conversely, every differential $k-$automorphism $\tau$ of $k(\theta,\theta')$ also induces a  translation map $\tau_p,$ for some $p\in \mathcal E(C).$ The mapping $\varphi: \mathcal E(C)\to \gal$ defined by $\varphi(\tau_p)=\tau^*_p$ is in fact an isomorphism of commutative groups. There is a bijective Galois correspondence between the intermediate differential subfields of $k(\theta,\theta')$ and the closed subgroups (which are precisely the finite subgroups) of $\mathcal E(C).$ In particular, it can be shown that $k$ is algebraically closed in $k(\theta,\theta').$  We refer the reader to  \cite[pages 803-807]{Kolchin1953} or \cite[Example 2.7]{Kovacic2006} for a proof of these facts.

	Let $k\subset K$ be fields and  $\Omega_{K/k}$ be  the $K-$vector space of $k-$differentials. Note that $\Omega_{K/k}$ has the following universal property:  there is a $k-$derivation map $\mathrm{d}: K\to \Omega_{K/k},$ which by definition is $k-$linear and for $x,y\in K,$ $\mathrm{d}(xy)= x \ \mathrm{d}y+y \ \mathrm{d}x,$ such that for any $k-$derivation $D$ from $K$ to a $K-$vector space $M,$ there is a unique $K-$homomorphism $\phi: \Omega_{K/k}\to M$ such that $\phi\circ \mathrm{d}=
	D.$

	For the convenience of the reader, in the next few propositions, we reproduce certain results from \cite{Rosenlicht1968} and \cite{Rosenlicht1976} that will be used in the proof of our theorem.
	
	\begin{proposition}\cite[Proposition 4]{Rosenlicht1976} \label{zerodifferential}
		Let $k\subset K$ be fields of characteristic zero. Let $c_1,\dots, c_n$ be elements of $k$ that are linearly independent over the rational numbers $\Q\subset k$ and  $u_1,\dots, u_n$ be nonzero elements of $K$ and $v$ be an element of $K.$ Then the element $$c_1\frac{\mathrm{d}u_1}{u_1}+c_2\frac{\mathrm{d}u_2}{u_2}+\cdots+c_n\frac{\mathrm{d}u_n}{u_n}+\mathrm{d}v$$ of $\Omega_{K/k}$ is zero if and only if each $u_1,\dots,u_n, v$ is algebraic over $k.$ 
	\end{proposition}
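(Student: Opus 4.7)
The ``if'' direction is immediate: for any $\alpha\in K$ algebraic over $k$, the minimal polynomial $f\in k[X]$ of $\alpha$ satisfies $f'(\alpha)\neq 0$ in characteristic zero, so applying $\mathrm{d}$ to $f(\alpha)=0$ in $\Omega_{K/k}$ yields $f'(\alpha)\,\mathrm{d}\alpha=0$ and hence $\mathrm{d}\alpha=0$; specialising $\alpha$ to each $u_i$ and to $v$ makes every term in the sum vanish. For the converse the plan is two reductions followed by residue theory on a proper model. First, since only finitely many elements of $K$ occur in the relation, I would replace $K$ by $K_0:=k(u_1,\dots,u_n,v)$: because $K/K_0$ is separable in characteristic zero and $K$ is faithfully flat over $K_0$, the natural map $\Omega_{K_0/k}\to\Omega_{K/k}$ is injective, so the relation already vanishes in $\Omega_{K_0/k}$. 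Second, letting $L$ be the algebraic closure of $k$ in $K_0$, the argument above gives $\Omega_{L/k}=0$, and the first fundamental exact sequence for differentials of $k\subset L\subset K_0$ then yields an isomorphism $\Omega_{K_0/k}\cong\Omega_{K_0/L}$. Replacing $k$ by $L$, I may assume $k$ is algebraically closed in the finitely generated extension $K$, and the goal becomes to prove $u_1,\dots,u_n,v\in k$.

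If $K=k$ the conclusion is trivial, so assume $\mathrm{tr.deg}(K/k)\geq 1$ and fix a normal projective model $X/k$ of $K$. For each prime divisor $D\subset X$ the associated discrete valuation $\nu_D$ admits a residue map on $\Omega_{K/k}$ with $\mathrm{res}_D(\mathrm{d}u/u)=\nu_D(u)$ for $u\in K^{\times}$ and $\mathrm{res}_D(\mathrm{d}v)=0$ for $v\in K$. The vanishing of $\sum c_i\,\mathrm{d}u_i/u_i+\mathrm{d}v$ then gives $\sum c_i\,\nu_D(u_i)=0$ for every $D$, and $\mathbb{Q}$-linear independence of the $c_i$ combined with the integrality of the $\nu_D(u_i)$ forces $\nu_D(u_i)=0$ for all $i$ and all $D$. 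Hence each $u_i$ is a global unit on $X$, and since $X$ is proper integral with $k$ algebraically closed in $K$ one has $\Gamma(X,\mathcal{O}_X)=k$, so $u_i\in k^{\times}$ for every $i$. The relation collapses to $\mathrm{d}v=0$; since $k$ is algebraically closed in $K$, any $v\in K\setminus k$ would be transcendental over $k$ and could be extended to a separating transcendence basis whose differentials form a $K$-basis of $\Omega_{K/k}$, making $\mathrm{d}v$ a nonzero basis vector. Therefore $v\in k$ as well.

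The main obstacle is making the residue formulas precise in positive transcendence degree, where the residue field $\kappa(D)$ need not equal $k$: one passes to the $\pi$-adic completion $\widehat{K}_{\nu_D}\cong \kappa(D)((\pi))$ via the Cohen structure theorem in equal characteristic zero and computes residues in the module of continuous differentials, where the identities $\mathrm{res}_D(\mathrm{d}u/u)=\nu_D(u)$ and $\mathrm{res}_D(\mathrm{d}v)=0$ reduce to classical Laurent-series calculations, and one must also verify that the prescription descends unambiguously to $\Omega_{K/k}$. Alternatively, the residue machinery can be bypassed: for each $u\in K^{\times}\setminus k^{\times}$ one exhibits a discrete valuation $\nu$ of $K/k$ with $\nu(u)\neq 0$ by restricting to the function field of a suitable curve in $X$ through a divisor of $u$, and then repeatedly exploits the $\mathbb{Q}$-linear independence of the $c_i$ to recover $\nu(u_i)=0$ for all $i$ at each such $\nu$.
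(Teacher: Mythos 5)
The paper does not prove this proposition at all --- it is quoted from Rosenlicht's 1976 paper --- so your proposal can only be measured against Rosenlicht's original argument, and it takes a genuinely different (higher-dimensional, one-shot) route. Your ``if'' direction is correct, as are both reductions: the injectivity of $\Omega_{K_0/k}\to\Omega_{K/k}$ in characteristic zero lets you pass to $K_0=k(u_1,\dots,u_n,v)$, and the first exact sequence with $\Omega_{L/k}=0$ lets you replace $k$ by its algebraic closure $L$ in $K_0$. The closing steps are also fine: on a normal proper model, $\nu_D(u_i)=0$ for every prime divisor $D$ puts $u_i$ in $\Gamma(X,\mathcal{O}_X)=k$, and $\mathrm{d}v=0$ forces $v\in k$ since any $v\notin k$ would be part of a separating transcendence basis. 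The load-bearing step you do not actually carry out is the identity $\sum_i c_i\,\nu_D(u_i)=0$ at divisorial valuations whose residue field $\kappa(D)$ has positive transcendence degree over $k$, and you are right to flag it: there $\Omega_{\kappa(D)/k}\neq 0$, so the ``residue'' of a $1$-form is only defined after choosing a coefficient field and a uniformizer in $\hat{\mathcal{O}}_{X,D}\cong\kappa(D)[[\pi]]$, and one must verify that the unit part of each $u_i$ contributes only horizontal terms and regular $\mathrm{d}\pi$-terms, and that the map from $\Omega_{K/k}$ to the continuous differentials of the completion is the one induced by the universal property. This can be made to work, but as written it is an acknowledged hole rather than a proof, and your fallback of ``restricting to a suitable curve'' does not repair it: there is no natural map $\Omega_{K/k}\to\Omega_{k(C)/k}$, and transferring the differential identity to a curve requires exactly the local analysis you are trying to avoid.

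For comparison, Rosenlicht's argument sidesteps completions in higher dimension entirely by inducting on $r=\mathrm{trdeg}(K/k)$: let $F$ be the algebraic closure of $k(x_1,\dots,x_{r-1})$ in $K$ and push the relation into $\Omega_{K/F}$, where $K/F$ is a one-variable function field whose places have residue fields finite over $F$; the classical Laurent-series residue computation then applies verbatim (the residue field contributes no differentials) and yields $u_1,\dots,u_n,v\in F$. Since $\Omega_{F/k}\to\Omega_{K/k}$ is injective in characteristic zero, the relation already vanishes in $\Omega_{F/k}$ and the induction hypothesis finishes the proof. I would recommend either adopting that inductive scheme or genuinely writing out the well-definedness and the two residue formulas for divisorial valuations; in its current form the central step of your argument is asserted, not proved.
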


	\begin{proposition} \cite[Lemma]{Rosenlicht1976}\label{differentialdependence} Let $k$ be a differential field of characteristic zero,  $K$ be a differential field extension of $k,$  $C_K=C$ and   tr.d$(K|k)=1.$  If there are two $k-$differentials of the form $c_1\mathrm{d}u_1/u_1+\cdots+c_n\mathrm{d}u_n/u_n+\mathrm{d}v,$ where $c_1,\dots, c_n$ are constants and each $u_1,\dots,u_n, v$ in $K$ such that    $c_1u'_1/u_1+\cdots+c_nu'_n/u_n+v'\in k$ then the differentials are linearly dependent over $C.$ 
	\end{proposition}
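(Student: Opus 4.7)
Write $\omega_1:=\sum_{i=1}^n c_i\,\mathrm{d}u_i/u_i+\mathrm{d}v$ and $\omega_2:=\sum_{j=1}^m d_j\,\mathrm{d}\tilde u_j/\tilde u_j+\mathrm{d}\tilde v$. If either $\omega_i=0$ the required $C$-linear dependence is trivial, so I assume both are nonzero, and aim to show $\omega_1=g\,\omega_2$ in $\Omega_{K/k}$ for some $g\in C$. Because $\mathrm{tr.d}(K|k)=1$ and algebraic extensions contribute nothing to the module of differentials in characteristic zero, $\dim_K\Omega_{K/k}=1$, with $\mathrm{d}t$ a basis for any $t\in K$ transcendental over $k$. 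Writing $\omega_i=h_i\,\mathrm{d}t$ and applying the $K$-linear map $\phi:\Omega_{K/k}\to K$ determined by $\phi(\mathrm{d}x)=x'$, I get $\phi(\omega_i)=h_i t'$; since $t\notin C_K=C_k\subseteq k$ we have $t'\neq 0$, and by hypothesis $\phi(\omega_i)\in k$. Hence $g:=h_1/h_2=\phi(\omega_1)/\phi(\omega_2)\in k$ and $\omega_1=g\,\omega_2$. The remaining task is to upgrade $g\in k$ to $g\in C$.

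After replacing $\{c_i\}$ (resp.\ $\{d_j\}$) by a $\Q$-basis of its $\Q$-span and absorbing the resulting rational coefficients into integer powers of the $u_i$ (resp.\ $\tilde u_j$), I may assume both constant sets are $\Q$-linearly independent. Since $\mathrm{d}$ is a $k$-derivation, $\mathrm{d}g=0$, and $\omega_1-g\omega_2=0$ rearranges in $\Omega_{K/k}$ to
\[
\sum_{i=1}^n c_i\,\frac{\mathrm{d}u_i}{u_i}\;-\;\sum_{j=1}^m(gd_j)\,\frac{\mathrm{d}\tilde u_j}{\tilde u_j}\;+\;\mathrm{d}(v-g\tilde v)\;=\;0.
\]
Now split on whether the elements $c_1,\dots,c_n,-gd_1,\dots,-gd_m$ of $k$ are $\Q$-linearly independent. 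In the \emph{dependent} case, a nontrivial relation $\sum\alpha_i c_i=g\sum\beta_j d_j$ with $\alpha_i,\beta_j\in\Q$ must satisfy $\sum\beta_j d_j\neq 0$ (otherwise the $\Q$-independence of $\{c_i\}$ and of $\{d_j\}$ separately would force every coefficient to vanish), so $g=\bigl(\sum\alpha_i c_i\bigr)/\bigl(\sum\beta_j d_j\bigr)\in C$ and we are done.

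Otherwise the coefficients are $\Q$-linearly independent in $k$, and Proposition~\ref{zerodifferential} forces each $u_i$, $\tilde u_j$, and $w:=v-g\tilde v$ to be algebraic over $k$. Let $\overline k$ denote the algebraic closure of $k$ in $K$. Applying the derivation of $K$ to the identity $v=g\tilde v+w$ yields
\[
v'-g\tilde v'-w'\;=\;g'\tilde v.
\]
Because $u_i,\tilde u_j,w\in\overline k$, so are their derivatives and the logarithmic derivatives $u_i'/u_i,\tilde u_j'/\tilde u_j$; combined with $\phi(\omega_1),\phi(\omega_2)\in k\subseteq\overline k$ this shows $v',\tilde v'\in\overline k$. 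The left-hand side above therefore lies in $\overline k$, whence $g'\tilde v\in\overline k$. But $\omega_2\neq 0$ together with $\mathrm{d}\tilde u_j=0$ gives $\mathrm{d}\tilde v\neq 0$, so $\tilde v$ is transcendental over $k$ and $\tilde v\notin\overline k$; this forces $g'=0$, so $g\in C_k=C$, yielding the desired nontrivial $C$-linear relation $\omega_1-g\omega_2=0$. The main obstacle is precisely this final step: Proposition~\ref{zerodifferential} on its own only delivers algebraicity, and extracting the \emph{constancy} of $g$ requires invoking the differential-field structure of $K$ beyond the algebra of $\Omega_{K/k}$.
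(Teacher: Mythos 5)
There is a genuine gap at the very first step, and everything downstream depends on it. The map $\phi:\Omega_{K/k}\to K$ ``determined by $\phi(\mathrm{d}x)=x'$'' does not exist: the universal property of $\Omega_{K/k}$ produces a $K$-linear map only from a \emph{$k$-derivation} of $K$, i.e.\ one vanishing on $k$, whereas the derivation of $K$ restricts on $k$ to the (generally nonzero) derivation of $k$. Concretely, if $x\in k$ has $x'\neq 0$, then $\mathrm{d}x=0$ in $\Omega_{K/k}$ while $x'\neq 0$, so the recipe $\sum x_i\,\mathrm{d}y_i\mapsto \sum x_i y_i'$ depends on the chosen representative and is not a function on $\Omega_{K/k}$. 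In particular your two evaluations of $\phi(\omega_i)$ --- as $h_i t'$ from the representation $\omega_i=h_i\,\mathrm{d}t$, and as $\sum c_i u_i'/u_i+v'\in k$ from the defining representation --- are in general different elements of $K$ (decompose $'=D_0+t'\,\partial/\partial t$ to see the discrepancy), so the conclusion $g=h_1/h_2\in k$ does not follow. Since the entire second half of your argument (the $\Q$-linear case split, the application of Proposition \ref{zerodifferential} with coefficients $gd_j\in k$, and the final step $g'\tilde v\in\overline{k}$) takes $g\in k$ as its input, the proof collapses at this point.

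The repair --- and this is the engine of Rosenlicht's proof, which the paper only cites --- is to use not a map $\Omega_{K/k}\to K$ but the map $L:\Omega_{K/k}\to\Omega_{K/k}$ induced by the derivation, $L(x\,\mathrm{d}y)=x'\,\mathrm{d}y+x\,\mathrm{d}(y')$. This \emph{is} well defined, precisely because $k$ is a differential subfield (so $K\cdot\mathrm{d}k$ is stable under $L$). One checks $L(\mathrm{d}u/u)=\mathrm{d}(u'/u)$, hence $L\omega_i=\mathrm{d}\bigl(\sum_j c_j u_j'/u_j+v'\bigr)=\mathrm{d}(a_i)=0$ because the hypothesis puts $a_i\in k$. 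Your dimension count $\dim_K\Omega_{K/k}=1$ is correct, so $\omega_1=g\omega_2$ for some $g\in K$ (assuming $\omega_2\neq 0$), and applying $L$ gives $0=g'\omega_2+gL\omega_2=g'\omega_2$, whence $g'=0$ and $g\in C_K=C$ in one stroke --- with no intermediate claim that $g\in k$, no case analysis, and no appeal to Proposition \ref{zerodifferential}. Your closing remark correctly identifies that the difficulty is extracting constancy of $g$ from the differential structure; the Lie-derivative map is the standard device that does exactly this.
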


   \begin{proposition}\cite[Lemma]{Rosenlicht1968}\label{elementary}
   	Let $k$ be a differential field of characteristic zero. Let $k(t)$ be a differential field extension of $k$, $C_{k(t)}=C$ with $t$ is transcendental over $k$ and either $t' \in k$ or $t'/t \in k$. Let $c_1,\dots,c_n \in k$ be linearly independent over $\mathbb{Q}$ and let $u_1,\dots, u_n$ be nonzero element in $k(t)$, $v \in k(t)$. Then if 
   	\begin{equation*}
   		\sum_{i=1}^{n}c_i\frac{u_i'}{u_i} + v' \in k[t]
   	\end{equation*}
   we have $v \in k[t] $ and in the case $t' \in k$, each $u_i \in k$, while in the case $t'/t \in k$, for each $i=1,\dots, n$, $u_i/t^{\nu_i} \in k$ for some integer $\nu_i$.
   \end{proposition}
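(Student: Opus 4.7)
The plan is to exploit the UFD structure of $k[t]$ and match pole orders on the two sides of
$$\sum_{i=1}^n c_i\frac{u_i'}{u_i} + v'\in k[t].$$
Factor each nonzero $u_i = a_i \prod_p p^{e_{ip}}$ with $a_i\in k\setminus\{0\}$, the product running over monic irreducibles of $k[t]$ and $e_{ip}\in\Z$ finitely supported, so that
$$\sum_{i=1}^n c_i\frac{u_i'}{u_i} = \sum_{i=1}^n c_i\frac{a_i'}{a_i} + \sum_{p}\Big(\sum_{i=1}^n c_i e_{ip}\Big)\frac{p'}{p};$$
thus the logarithmic-derivative contribution has at most simple poles at the $p$'s appearing. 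The key local input is the claim: for every monic irreducible $p\in k[t]$ of positive degree, with the sole exception of $p=t$ when $t'/t\in k$, one has $p\nmid p'$ in $k[t]$. In the case $t'\in k$ this is immediate from $\deg p'<\deg p$. In the case $t'/t\in k$, writing $p=\sum_{i=0}^n a_i t^i$ with $a_n=1$ and $\alpha:=t'/t$ gives $p'=\sum_{i=0}^n(a_i'+ia_i\alpha)t^i$, of degree exactly $n$; then $p\mid p'$ would force $p'=n\alpha p$, so $(a_i t^{i-n})'=0$ for each $i<n$, forcing $a_i=0$ (since $t$ is transcendental over $k$, no nonzero constant multiple of $t^{n-i}$ lies in $k$), whence $p=t^n$ and, by irreducibility, $p=t$, contradicting our exclusion.

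Next I would show $v\in k[t]$. If $v$ had a pole of order $m\geq 1$ at an \emph{ordinary} monic irreducible $p$ (i.e.\ one covered by the claim above), a direct residue calculation gives $v_p(v')=-(m+1)$, while $v_p\!\left(\sum_i c_i u_i'/u_i\right)\geq -1$, so the sum would have a pole at $p$ of order $\geq 2$, contradicting membership in $k[t]$. In the case $t'\in k$ this already yields $v\in k[t]$. In the case $t'/t\in k$ it leaves open a pole at $p=t$; I would expand $v=\sum_{j\geq -m}A_j t^j$ as a Laurent polynomial with $A_j\in k$ (which is legitimate once the ordinary-$p$ analysis rules out all other poles), compute $v'=\sum_j(A_j'+j\alpha A_j)t^j$, and observe that $v'\in k[t]$ forces $(A_j t^j)'=0$ for each $j<0$; since $t$ is transcendental over $k$, this gives $A_j=0$ for $j<0$ and hence $v\in k[t]$.

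With $v\in k[t]$ in hand, the residual sum $\sum_p\big(\sum_i c_i e_{ip}\big)p'/p$ over the ordinary $p$'s must itself lie in $k[t]$; since each such $p$ still satisfies $p\nmid p'$, a nonzero coefficient $\sum_i c_i e_{ip}$ would produce a genuine simple pole at $p$, so $\sum_i c_i e_{ip}=0$ for every ordinary $p$. The $\Q$-linear independence of the $c_i$ then forces every $e_{ip}=0$. This gives $u_i=a_i\in k$ in the case $t'\in k$, and $u_i=a_i t^{e_{it}}$ with $a_i\in k$ and $e_{it}\in\Z$ in the case $t'/t\in k$, as required. The main obstacle is carving out the anomalous behavior of $p=t$ in the exponential case, where $p\mid p'$ and the pole-order contradiction breaks down; it must be replaced by the Laurent-series computation above, which crucially uses the transcendence of $t$ over $k$.
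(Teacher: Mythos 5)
This proposition is quoted from Rosenlicht's 1968 paper and is not reproved in the present article, so there is no in-paper proof to compare against; your argument is correct and is essentially Rosenlicht's original one (unique factorization/partial fractions in $k[t]$, the fact that $p\nmid p'$ for every monic irreducible $p$ except $p=t$ in the exponential case, and a comparison of pole orders, with the no-new-constants hypothesis killing the Laurent tail at $t$). The only phrase to tighten is ``observe that $v'\in k[t]$ forces\dots'': what you actually know is that $\sum_i c_i u_i'/u_i+v'\in k[t]$, and since the logarithmic-derivative sum has valuation $\geq 0$ at $t$ (because $t'/t\in k$), $v'$ itself can have no pole at $t$, which is what annihilates the coefficients $A_j$ for $j<0$.
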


	\section{Main Results}

\begin{lemma} \label{dihedral-lemma}Let $E$ be a  Picard-Vessiot extension of $k$ having a differential Galois group isomorphic to the infinite dihedral group $\mathrm D_\infty.$  If $f\in k$ has an elementary integral over $E$ then it has an elementary integral over $k.$
	
\end{lemma}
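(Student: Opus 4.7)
The plan is to use Proposition \ref{dihedralanalysis} to reduce the problem to finding an elementary integral over the intermediate quadratic extension $L := k(\alpha)$, and then to descend from $L$ to $k$ by averaging over $\mathrm{Gal}(L/k) \cong \Z_2$. Write the given elementary integral as $f = \sum_{i=1}^n c_i u_i'/u_i + v'$ with $c_i \in C$, $u_i \in E^*$, $v \in E$. By combining terms via the identity $(u^p w^q)'/(u^p w^q) = p\,u'/u + q\,w'/w$, I may assume that the $c_i$ are linearly independent over $\Q$. Since $E = L(\eta)$ with $\eta$ transcendental over $L$ and $\eta'/\eta = \alpha \in L$, Proposition \ref{elementary} applied with base field $L$ and transcendental $t = \eta$ yields, using $f \in k \subseteq L \subseteq L[\eta]$, that $v \in L[\eta]$ and each $u_i = b_i\,\eta^{\nu_i}$ for some $b_i \in L^*$ and $\nu_i \in \Z$.

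Substituting these forms back into the relation and writing $v = \sum_{j\geq 0} v_j \eta^j \in L[\eta]$, I obtain
\[
f \;=\; \sum_{i=1}^n c_i \frac{b_i'}{b_i} \;+\; \Bigl(\sum_{i=1}^n c_i \nu_i\Bigr)\alpha \;+\; v_0' \;+\; \sum_{j\geq 1} \bigl(v_j' + j\alpha v_j\bigr)\eta^j.
\]
Since $f \in L$ and the monomials $1,\eta,\eta^2,\dots$ are $L$-linearly independent, the coefficient $v_j' + j\alpha v_j$ must vanish for each $j \geq 1$. But this says $(v_j\eta^j)' = 0$, so $v_j\eta^j \in C \subseteq L$; since $\eta$ is transcendental over $L$ this forces $v_j = 0$. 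Therefore $v = v_0 \in L$, and I have produced an expression of $f$ as an elementary integral over $L$.

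To descend to $k$, apply the nontrivial element $\tau$ of $\mathrm{Gal}(L/k)$ to the identity $f = \sum c_i b_i'/b_i + (\sum c_i \nu_i)\alpha + v_0'$. Since $\tau$ fixes $f$ and every $c_i$ and commutes with the derivation, writing $\beta := \tau(\alpha)$ and averaging the two identities gives
\[
f \;=\; \sum_{i=1}^n \frac{c_i}{2}\cdot\frac{\bigl(b_i\tau(b_i)\bigr)'}{b_i\tau(b_i)} \;+\; \Bigl(\sum_{i=1}^n c_i\nu_i\Bigr)\cdot\frac{\alpha+\beta}{2} \;+\; \left(\frac{v_0 + \tau(v_0)}{2}\right)'.
\]
Here $b_i\tau(b_i) = N_{L/k}(b_i) \in k^*$ and $\tfrac{1}{2}(v_0+\tau(v_0)) = \tfrac{1}{2}\mathrm{tr}(v_0) \in k$, while Proposition \ref{dihedralanalysis}(iii) furnishes $\gamma \in k^*$ with $\tfrac{1}{2}(\alpha+\beta) = \tfrac{1}{2}\mathrm{tr}(\alpha) = \tfrac{1}{4}\gamma'/\gamma$, so every summand lives in $k$ and $f$ has an elementary integral over $k$. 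The main conceptual step is the collapse of the polynomial $v$ to an element of $L$; after that, the descent is straightforward once the trace formula in Proposition \ref{dihedralanalysis}(iii) is invoked to convert the surviving $\alpha$ term into a logarithmic derivative of an element of $k$.
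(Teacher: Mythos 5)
Your proof is correct and follows essentially the same route as the paper: reduce via Proposition \ref{dihedralanalysis} and Proposition \ref{elementary} to an expression over the quadratic extension $k(\alpha)$, then descend to $k$ by applying the trace and invoking $\mathrm{tr}(\alpha)=\tfrac12\gamma'/\gamma$. The only cosmetic difference is that you prove $v\in k(\alpha)$ by direct comparison of coefficients of powers of $\eta$ (showing each $v_j\eta^j$ is a constant, hence $v_j=0$ for $j\geq 1$), where the paper cites the Kolchin--Ostrowski theorem.
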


	\begin{proof}Let  $c_1u'_1/u_1+\cdots+c_nu'_n/u_n+v'=f\in k,$ where $c_i\in C$ and $v, u_i\in E.$ We may further assume that $c_1,\dots, c_n$ are $\Q-$linearly independent (see \cite[page 158]{Rosenlicht1968}).  By Proposition \ref{dihedralanalysis} we have a tower $k \subset k(\alpha) \subset k(\alpha,\eta)=E$, where $k(\alpha)$ is a quadratic extension,  $\mathrm{tr}(\alpha)=\frac{1}{2}\frac{\gamma'}{\gamma},$ $\eta$ is transcendental over $k$ and $\eta'/\eta=\alpha.$  Now we use Proposition \ref{elementary} and  obtain that for each $i,$ $u_i=a_i\eta^{m_i}$ with $a_i \in k(\alpha),$  $m_i$ an integer, and $v \in k(\alpha)[\eta]$. Since $u_i'/u_i=a_i'/a_i + m_i \eta'/\eta=a_i'/a_i + m_i\alpha$. We have
		\begin{equation*}
			f = \sum_{i=1}^{n}c_i\frac{a_i'}{a_i} + (\sum_{i=1}^n m_ic_i)\alpha +v'.
		\end{equation*}
	    This implies $v' \in k(\alpha).$ Applying Kolchin-Ostrowski, \cite[Appendix]{Rubel-Singer1988},  we have $v \in k(\alpha).$  Hence
	   \begin{equation}\label{dihedral-exponential}
		f = \sum_{i=1}^{n}c_i\frac{a_i'}{a_i} + c\alpha +v'.
	   \end{equation}
       where $a_1,\dots,a_n,v \in k(\alpha)$ and $c=\sum_{i=1}^{n} m_ic_i$. 
       
       Let $G$ be the Galois group of the quadratic extension $k(\alpha)$ of $k.$ Then, for all $x\in k(\alpha),$ $$\mathrm{tr}(x)=\sum_{\sigma
       	\in G}\sigma(x)\in k, \qquad  \mathrm{nr}(x)=\prod_{\sigma\in G}\sigma(x)\in k\quad \text{and} \quad \mathrm{tr}(x)'=\mathrm{tr}(x').$$  Applying the trace map to Equation (\ref{dihedral-exponential}), we obtain \begin{align*}2f &= \sum_{i=1}^n c_i \mathrm{tr}\left(\frac{a_i'}{a_i}\right) + c \ \mathrm{tr}(\alpha) +\mathrm{tr}(v')\\ 
       &=\sum_{i=1}^{n}c_i \left(\frac{\mathrm{nr}(a_i)'}{\mathrm{nr}(a_i)}\right) + c\ \mathrm{tr}(\alpha) +\mathrm{tr}(v)'.\end{align*}
  Since we have proved in Proposition \ref{dihedralanalysis} that  $\mathrm{tr}(\alpha)=\frac{1}{2}\frac{\gamma'}{\gamma}$ for some $\gamma\in k,$  the proof of the Lemma is now complete.     \end{proof}
  
\begin{lemma} \label{sl_2-lemma} Let $E$ be a  Picard-Vessiot extension of $k$ having a differential Galois group isomorphic to $\mathrm{SL}_2(C).$  If $f\in k$ has an elementary integral over $E$ then it has an elementary integral over $k.$
	
\end{lemma}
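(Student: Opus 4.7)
The plan is to follow the same descent strategy as in Lemma \ref{dihedral-lemma}, but along the longer tower
$$k \subsetneqq k(\alpha) \subsetneqq k(\alpha,\xi) \subsetneqq k(\alpha,\xi,\eta) = E$$
furnished by Proposition \ref{SLanalysis}, in which $\eta' = \omega/\xi^2$ for some $\omega \in k \setminus \{0\}$, $\xi'/\xi = \alpha$, and $\alpha$ is a zero of the Riccati polynomial $R(X) = X' + X^2 - rX - s$ which has no zero in $k$. Starting from $f = \sum_i c_i u_i'/u_i + v'$ with $u_i, v \in E$ and $c_i \in C$ which we may take to be $\mathbb Q$-linearly independent (as in \cite[p.~158]{Rosenlicht1968}), the plan is to descend successively into $k(\alpha,\xi)$, then $k(\alpha)$, and finally $k$.

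The first two descents are routine applications of Proposition \ref{elementary}, in direct analogy with the proof of Lemma \ref{dihedral-lemma}. Applied to the transcendental primitive element $\eta$ (with $\eta' \in k(\alpha,\xi)$), Proposition \ref{elementary} yields $u_i \in k(\alpha,\xi)$ and $v \in k(\alpha,\xi)[\eta]$, and examining the top $\eta$-coefficient of $v'$ against the fact that $v' \in k(\alpha,\xi)$ forces $v \in k(\alpha,\xi)$. Applied next to the exponential element $\xi$ (with $\xi'/\xi \in k(\alpha)$), it yields $u_i = \xi^{\nu_i} a_i$ with $a_i \in k(\alpha)$ and $v \in k(\alpha)[\xi]$, and the analogous $\xi$-degree argument gives $v \in k(\alpha)$. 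After these two steps the equation takes the form
\begin{equation*}
 f = \sum_{i=1}^n c_i\,\frac{a_i'}{a_i} + c\alpha + v', \qquad c := \sum_i c_i\nu_i \in C,\ \ a_i, v \in k(\alpha).
\end{equation*}

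The crux is the final descent from $k(\alpha)$ to $k$, which I would carry out by a place-by-place analysis after identifying $k(\alpha)$ with $k(T)$ equipped with the derivation determined by $D(T) = -T^2 + rT + s$. The decisive structural input is that for every monic irreducible $p \in k[T]$ one has $p \nmid D(p)$, equivalently, that no finite algebraic extension of $k$ contains a zero of $R$. For linear $p$ this is exactly Proposition \ref{SLanalysis}(iii); for higher-degree $p$ I would replicate the argument of that proof over the algebraic extension $k' = k[T]/(p)$, using that $k$ is algebraically closed in $E$ to ensure that $\gal(E\cdot k' \, | \, k')$ is again $\mathrm{SL}_2(C)$, whereupon a Riccati zero in $k'$ would produce a Galois-stable transcendental generator in $E \cdot k'$ and hence a proper closed normal subgroup of dimension $2$ in $\mathrm{SL}_2(C)$, contradicting simplicity. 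Granted this, $D(p)/p$ has a simple pole at every irreducible $p$, and comparing $p$-adic valuations on both sides of the equation together with $\mathbb Q$-linear independence of the $c_i$ forces the $p$-exponent of each $a_i$ to vanish and $v_p(v) \geq 0$ at every $p$; running over all $p$ gives $a_i \in k^\times$ and $v \in k[T]$. A final $T$-degree count---the leading $T^{\deg v + 1}$ monomial of $D(v)$ cannot be absorbed by the lone linear term $cT$---forces $\deg v \leq 0$, whence $v \in k$ and then $c = 0$, reducing the equation to $f = \sum_i c_i a_i'/a_i + v'$ with $a_i, v \in k$, as required. I expect the main obstacle to be the above extension of Proposition \ref{SLanalysis}(iii) to algebraic extensions of $k$; the pole and degree bookkeeping that it enables is then mechanical.
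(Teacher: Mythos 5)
Your overall strategy is the paper's: descend along the tower of Proposition \ref{SLanalysis}, and for the final step from $k(\alpha)$ to $k$ run a local analysis on $k(\alpha)\cong k(T)$ whose decisive input is that the Riccati polynomial has no zero in any algebraic extension of $k$. Your way of getting that input (base change to $k'$, invariance of the Galois group because $k$ is algebraically closed in $E$, simplicity of $\mathrm{SL}_2(C)$) is exactly what the paper does, passing to $\overline{k}E$ via \cite[Proposition 6.6]{Magid1994}; and your valuation-at-closed-points bookkeeping is equivalent to the paper's Laurent-expansion computation at points $a\in\overline{k}$. That part of the proposal is sound.

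The gap is in your first descent. From $v\in k(\alpha,\xi)[\eta]$ and $v'\in k(\alpha,\xi)$ you cannot conclude $v\in k(\alpha,\xi)$: the coefficient argument kills the coefficients of $\eta^j$ only for $j\ge 2$ and forces the coefficient of $\eta$ to be a constant, so the correct conclusion is $v=v_0+e_1\eta$ with $v_0\in k(\alpha,\xi)$ and $e_1\in C$ (this is the Kolchin--Ostrowski step in the paper). The residual term $e_1\eta'=e_1\omega/\xi^{2}$ then survives into your second descent, where the identity becomes $f-e_1\omega\xi^{-2}=\sum_i c_iu_i'/u_i+v_0'$; the left-hand side is a Laurent polynomial in $\xi$, not an element of $k(\alpha)[\xi]$, so Proposition \ref{elementary} does not apply in the form you invoke it. This is precisely the point where the paper departs from the naive tower: it takes $y=\omega/\xi^{2}$ as the exponential generator, so that the residual term is a monomial in $y$ and $E$ becomes a quadratic (hence elementary) extension of $k(\alpha,y,\eta)$ that is disposed of by Liouville's theorem. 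Your route can be repaired without that device---apply Rosenlicht's lemma with $t=\xi^{-1}$, observe that the stray term lands in the $\xi^{-2}$-coefficient, and extract the $\xi^{0}$-part to recover your displayed equation over $k(\alpha)$---but as written the assertion ``forces $v\in k(\alpha,\xi)$'' is false and the step after it is not justified.
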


	\begin{proof} 
		
We first  resolve $E$ as a tower of differential fields as  in Proposition \ref{SLanalysis} (\ref{structureSL2}). Then for $y:=\omega/\xi^2,$ we have  $$\beta:=\frac{y'}{y}=\frac{\omega'}{\omega}-2\alpha\in k(\alpha)$$ and therefore, $k(\alpha)=k(\beta)$ and that $k(\alpha, y)$ is a differential field. Since $\eta'=y$ and that $\xi^2\in k(\alpha, y),$   we have  the following tower of differential fields: $$k\subsetneqq k(\alpha)\subsetneqq k(\alpha, y)\subsetneqq k(\alpha, y,\eta)\subsetneqq E=k(\alpha, y,\eta)(\xi),$$ where $\alpha, \xi$ and $\eta$ are as in Proposition \ref{SLanalysis} (\ref{structureSL2})  and $E$ is a quadratic  extension of $k(\alpha, y,\eta)$ with $\xi^2\in k(\alpha,y).$  We divide the rest of the proof into four steps.
	
\emph{Step 1.} Let \begin{equation*} c_1 \frac{u_1'}{u_1} + c_2 \frac{u'_2}{u_2} + \dots + c_n\frac{u_n'}{u_n}+ v' = f,\end{equation*} where  $u_1,\cdots, u_n\in E$ and $c_1,\cdots, c_n$ are $\Q-$linearly independent constants  and $v \in E.$  Then since $E$ is an elementary (algebraic) extension of $k(\alpha, y,\eta),$ by Liouville's theorem, we obtain that $f$ admits a similar expression over $k(\alpha, y,\eta).$ Thus, we shall further assume that for each $i,$ $u_i\in k(\alpha, y, \eta)$ and that $v\in k(\alpha, y, \eta).$ 
	
\emph{Step 2.} Observe that $f\in k\subset k(\alpha,y)[\eta]$ and that $\eta'=y\in k(\alpha, y).$ Therefore, we shall apply Proposition \ref{elementary} and obtain that  $u_1,\dots, u_n \in k(\alpha,y)$ and that  $v\in k(\alpha,y)[\eta]$.  Since $v' \in k(\alpha,y),$ by Kolchin-Ostrowski,  there is a constant $e_1$ such that $v-e_1\eta \in k(\alpha, y).$ Thus    
		\begin{equation}\label{reduction-antiderivatives}
			f= \sum_{i=1}^{n}c_i\frac{u_i'}{u_i} + e_1y+(v-e_1\eta)'.   
		\end{equation}
		where $u_i,\dots u_n, v-e_1\eta \in k(\alpha,y)$.  
		
	\emph{Step 3.} We have $\sum^n_{i=1}c_iu'_i/u_i+(v-e_1\eta)'=f-e_1y\in k(\alpha)[y],$ where $y'=\beta y.$   Therefore, we shall again apply Proposition \ref{elementary} and obtain that $v-e_1\eta\in k(\alpha)[y]$ and for each $i=1,2,\dots, n,$ there is an integer $m_i$ and an element $v_i\in k(\alpha)$ such that $u_i=v_i y^{m_i}.$   Let $v-e_1\eta=w+a_1y+a_2y^2+\cdots+a_ly^l.$ Then we have \begin{align*}(v-e_1\eta)'&=w'+(a'_1+\beta a_1)y+\cdots+(a'_l+l\beta a_l)y^l\\
	\frac{u'_i}{u_i}&=\frac{v'_i}{v_i}+m_i\beta .\end{align*}		
Using the above equations,  we shall rewrite Equation (\ref{reduction-antiderivatives}) and obtain that
\begin{equation}\label{reduction-exponential} f= \sum_{i=1}^{n}c_i\frac{v'_i}{v_i} +e \beta +w',\end{equation} where $e=\sum^n_{i=1}m_ic_i,$ $v_i\in k(\alpha)=k(\beta)$ and $w\in k(\alpha).$

\emph{Step 4.} We shall now show that the 
 elements $w, v_1,\dots, v_n$ belong to $k$ and that $e=0.$   This will then complete the proof.

Let $\overline{E}$ be an algebraic closure of $E$ and $\bar{k}$ be the algebraic closure of $k$ in $\overline{E}.$ For any rational function $x\in k(\alpha)$ and $a\in \bar{k},$ let  \begin{equation*}
	x=r_\lambda(\alpha-a)^\lambda+r_{\lambda+1}(\alpha-a)^{\lambda+1}+\cdots
\end{equation*} be the Laurent series expansion of $x$ about $a.$ Since $$(\alpha-a)'= -R(a)-(\alpha-a)^2-(2a-r)(\alpha-a),$$
 we have the following Laurent series expansions for $x'$ and $x'/x$:

\begin{align*}
	x'&=-\lambda R(a) r_\lambda (\alpha-a)^{\lambda-1}+\cdots\\
	\frac{x'}{x}&=\lambda R(a)(\alpha-a)^{-1}+\cdots
\end{align*}

Thus, $\mathrm{ord}_a(x')\geq \mathrm{ord}_a(x)-1$ and that $\mathrm{ord}_a(x'/x)\geq -1.$ In particular, \begin{equation}\label{ordersumc_iv_i}\mathrm{ord}_a\left(\sum^n_{i=1}c_i(v'_i/v_i)\right)\geq -1.\end{equation}

Since $k$ is algebraically closed in $E,$ we have $\overline{k}E$ is a Picard-Vessiot extension of $\overline{k}$ with (see \cite[Proposition 6.6]{Magid1994}) $$\gal(\overline{k}E|\overline{k})\cong \gal(E|k)\cong \mathrm{SL}_2(C).$$  
Therefore, applying Proposition \ref{SLanalysis} with $\overline{k}$ in place of $k,$  we obtain that  $R(a)\neq 0$ for all $a\in \overline{k}.$ Thus,  if $a$ is a pole of $x$ then $\mathrm{ord}_a(x')=\mathrm{ord}_a(x)-1$ and  if $a$ is either a pole or a zero of $x$ then $\mathrm{ord}_a(x'/x)=-1.$ With these observations, we shall move on to show that $v_i$ and $w$ are in $k.$

Suppose that $w$ has a pole at $a\in \overline{k}.$ Then $\mathrm{ord}_a(w)<0$  and  as we observed, $\mathrm{ord}_a(w')=\mathrm{ord}_a(w)-1<-2.$ Note that $\mathrm{ord}_a(e\beta)=$ $\mathrm{ord}_a(e(\omega'/\omega)-e\alpha)=0$ or $1,$ depending on $e=0$ or $e\neq 0$. Therefore,  from Equations (\ref{reduction-exponential}) and (\ref{ordersumc_iv_i}), we obtain  $$0=\mathrm{ord}_a(f)\leq \min\left\{\mathrm{ord}_a\left(\sum^n_{i=1}c_i(v'_i/v_i)\right), \mathrm{ord}_a(e\beta), \mathrm{ord}_a(w')\right\}=\mathrm{ord}_a(w')<-2,$$  a contradiction.   Therefore, $w$ is a polynomial in $\alpha$ over $k.$ 

Let $A\subseteq \{1,2,\dots,n\}$ be the subset containing all $j$ such that $v_j$ has either a zero or a pole at $a\in \overline{k}.$  Suppose that $A\neq \emptyset.$ Then the Laurent series expansion of $\sum_{j\in A}c_i(v'_i/v_i)$ is 
$$\sum_{j\in A}c_i(v'_i/v_i)=\left(\sum_{i\in A}c_i\lambda_i\right) R(a)(\alpha-a)^{-1}+\cdots,$$ where $\lambda_i=\mathrm{ord}_a(v_i).$ 
Since $c_1,\dots, c_n$ are $\Q-$linearly independent, we have $\sum_{i\in A}c_i\lambda_i\neq 0.$ This implies $$\mathrm{ord}_a\left(\sum_{i\in A}c_i(v'_i/v_i)\right)=-1.$$ On the other hand, for each $i\in \{1,2,\dots, n\}\setminus A,$ we have $\mathrm{ord}_a(v'_i/v_i)\geq 0.$ Then  $$\mathrm{ord}_a\left(\sum_{i\in \{1,2,\dots,n\}\setminus A}c_i(v'_i/v_i)\right)\geq 0.$$ Thus
$$\mathrm{ord}_a\left(\sum^n_{i=1}c_i(v'_i/v_i)\right)=-1.$$ 

Since $w\in k[\alpha],$ we have  $w'\in k[\alpha]$ and that  $\mathrm{ord}_a(e\beta+w')\geq 0.$  Then $$0=\mathrm{ord}_a(f)\leq \min\left\{\mathrm{ord}_a\left(\sum^n_{i=1}c_i(v'_i/v_i)\right), \mathrm{ord}_a\left(e\beta+w'\right)\right\}=\mathrm{ord}_a\left(\sum^n_{i=1}c_i(v'_i/v_i)\right)=-1,$$ which is absurd.  Thus $A=\emptyset$ and we obtain that  $v_1,\dots, v_n$ belongs to $k$ and that $e\beta+w'=f- \sum^n_{i=1}c_i(v'_i/v_i)\in k.$   

To complete the proof, we only need to show that $e=0$ and that $w\in k.$ We have already noted that $w\in k[\alpha].$   Now we shall show that $w$ has no zeros in $\overline{k},$ which would then imply $w\in k.$    Suppose  $a\in \overline{k}$ is a zero of  $w$ of order $m\geq 1$ and that degree of $w$ is $l.$ Then $w'$ is of degree $l+1:$
\begin{align*}
	w&=a_m (\alpha-a)^m+\cdots+a_l(\alpha-a)^l , \  \  \text{where} \ a_l\neq 0, a_m\neq 0\ \ \text{and}\\
	w'&=-m a_m R(a)(\alpha-a)^{m-1}-\cdots- la_l (\alpha-a)^{l+1}.
	\end{align*}
Since $\beta=(\omega'/\omega)-2\alpha$ is a polynomial of degree $1$ and $e\beta+w'=f- \sum^n_{i=1}c_i(v'_i/v_i)\in k,$ by comparing the degrees, we obtain that $l=0.$ This in turn implies $w\in k$ and  $e=0.$  \end{proof} 
	
	\begin{lemma} \label{weierstrassian-lemma}Let $k(\theta, \theta')$ be a differential field extension of $k$, where $\theta$ is weierstrassian over $k.$  If $f\in k$ has an elementary integral over $k(\theta, \theta')$ then it has an elementary integral over $k.$
		
	\end{lemma}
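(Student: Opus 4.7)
The plan is to work inside the K\"ahler module $\Omega_{K/k}$, where $K=k(\theta,\theta')$, and exploit the Galois action $\gal\cong\mathcal E(C)$ described in Section \ref{prelims}. After the standard reduction \cite[page 158]{Rosenlicht1968}, I may assume that $f=\sum_{i=1}^n c_iu_i'/u_i+v'$ with $u_i\in K\setminus\{0\}$, $v\in K$, and $c_1,\dots,c_n\in C$ linearly independent over $\Q$. Form $\omega:=\sum c_i\,du_i/u_i+dv\in\Omega_{K/k}$. For each $\tau\in\gal$, the conjugate $\tau(\omega)=\sum c_i\,d\tau(u_i)/\tau(u_i)+d\tau(v)$ has the same shape, and its associated sum equals $\tau(f)=f\in k$. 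Proposition \ref{differentialdependence} then makes $\omega$ and $\tau(\omega)$ linearly dependent over $C$, yielding three cases: (a) $\omega=0$; (b) $\tau(\omega)=0$ for some $\tau\in\gal$; (c) $\tau(\omega)=\lambda(\tau)\omega$ for every $\tau$, with $\lambda\colon\gal\to C^\times$ a group homomorphism. Cases (a) and (b) are immediately successful: Proposition \ref{zerodifferential} combined with the fact that $k$ is algebraically closed in $K$ places the (possibly $\tau$-conjugated) $u_i$'s in $k^\times$ and $v$ in $k$, so the expression $f=\sum c_i\tau(u_i)'/\tau(u_i)+\tau(v)'$ is itself an elementary integral over $k$.

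The heart of the argument is to rule out case (c). Let $\omega_0:=\alpha\,d\theta/\theta'$ be the translation-invariant holomorphic differential on $\mathcal E$; it is a $K$-basis for $\Omega_{K/k}$ and satisfies $\tau(\omega_0)=\omega_0$ for every $\tau\in\gal$. Writing $\omega=h\omega_0$ with $h\in K^\times$, we obtain $\tau(h)/h=\lambda(\tau)\in C^\times$, so $\mathrm{div}(h)$ on $\mathcal E$ is invariant under translation by every point of the infinite group $\mathcal E(C)$. Any point in the support of $\mathrm{div}(h)$ would force its entire infinite $\mathcal E(C)$-coset into the support, contradicting the finiteness of the support of a principal divisor. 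Hence $\mathrm{div}(h)=0$ and $h\in k^\times$. Now $\omega=h\omega_0$ is a nonzero holomorphic differential on $\mathcal E$, so all its residues vanish; combining $\mathrm{res}_P(\omega)=\sum c_i\,\mathrm{ord}_P(u_i)=0$ with the $\Q$-independence of the $c_i$ gives $\mathrm{ord}_P(u_i)=0$ for every $P$ and $i$, and therefore $u_i\in k^\times$. Then $\omega=dv$ is itself a nonzero holomorphic differential, which forces $v\in K$ to have no poles on $\mathcal E$; consequently $v\in k$ and $dv=0$, contradicting $h\in k^\times$.

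The main obstacle I anticipate is this semi-invariant analysis in case (c), where a priori transcendental data in $K$ must be forced into $k$. The clean leverage is the divisor argument above, resting on the infinite cardinality of $\mathcal E(C)$ (automatic since $C$ is algebraically closed of characteristic zero) together with the finiteness of the support of a principal divisor; everything else reduces to standard residue and order bookkeeping on the elliptic curve together with the two general propositions of \cite{Rosenlicht1976}.
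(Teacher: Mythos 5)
Your proof is correct and reaches the same conclusion, but the core of your argument is genuinely different from the paper's. Both proofs share the same skeleton: act by $\gal\cong\mathcal E(C)$, invoke Proposition \ref{differentialdependence} to relate $\omega$ and $\tau(\omega)$, and ultimately play the infinitude of $\mathcal E(C)$ against the finiteness of a divisor support. Where you diverge is in how the semi-invariance is exploited. The paper feeds the dependence relation back into Proposition \ref{zerodifferential} to extract multiplicative relations $\tau(u_j^{l_\tau})=f_{j\tau}\prod_i u_i^{m_{ij\tau}}$ and $\tau(v)=c_\tau v+g_\tau$ with $f_{j\tau},g_\tau\in\overline{k}$, and then argues that the set of poles of all translates $\tau(u_j)$ would be infinite yet trapped in the finite set of zeros and poles of $u_1,\dots,u_n$. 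You instead use the one-dimensionality of the module of differentials: writing $\omega=h\,\omega_0$ against the translation-invariant differential $\omega_0=\alpha\,\mathrm d\theta/\theta'$ turns semi-invariance into $\tau(h)=\lambda(\tau)h$, the translation-invariance of $\mathrm{div}(h)$ forces $h$ to be constant, and then the residue computation $\mathrm{res}_P(\omega)=\sum_i c_i\,\mathrm{ord}_P(u_i)$ together with $\Q$-independence kills the logarithmic terms, while holomorphy of an exact differential kills $v$. Your route buys a cleaner and more conceptual finish (no power-product bookkeeping, and Proposition \ref{zerodifferential} is only needed in the degenerate case $\omega=0$), at the cost of invoking residue theory on the curve, which the paper avoids. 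Two small points of hygiene: the Galois group acts on $\overline{k}(\theta,\theta')$, so the whole analysis should be carried out in $\Omega_{\overline{k}(\theta,\theta')/\overline{k}}$ (your divisors, residues and the conclusion $h\in\overline{k}^{\times}$, not $k^{\times}$, live there), with the final descent to $k$ supplied by the fact that $k$ is algebraically closed in $k(\theta,\theta')$; and your case (b) is vacuous as a separate case, since $\tau$ is bijective on differentials and hence $\tau(\omega)=0$ already implies $\omega=0$.
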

	
	\begin{proof}
		Let $f\in k$ have an elementary integral over $k(\theta, \theta').$ Then, there are $\Q-$linearly independent constants $c_1,\dots, c_n,$ nonzero elements $u_1,\dots, u_n \in k(\theta,\theta')$ and an element $v\in k(\theta, \theta')$ such that 
		\begin{equation*}c_1u'_1/u_1+\cdots+c_nu'_n/u_n+v'=f\in k.\end{equation*}
			Fix an algebraic closure of $k(\theta, \theta')$ and let $\overline{k}$ be the relative algebraic closure of $k.$     Let $\gal$ be the group of all differential automorphisms of $\overline{k}(\theta,\theta')$ fixing elements of $\overline{k}.$ For $\tau\in \gal,$ we have   
		\begin{equation*}c_1\tau(u_1)'/\tau(u_1)+\cdots+c_n\tau(u_n)'/\tau(u_n)+\tau(v)'=f\in k.\end{equation*}
		From Proposition \ref{differentialdependence}, we see that there is a constant $c_\tau\in C$ such that \begin{equation}\label{sigma+elementaryexpression} c_1\mathrm{d}\tau(u_1)/\tau(u_1)+\cdots+c_n\mathrm{d}\tau(u_n)/\tau(u_n)-c_\tau \left(c_1\mathrm{d}u_1/u_1+\cdots+c_n\mathrm{d}u_n/u_n\right)+\mathrm{d}(\tau(v)-c_\tau v)=0.\end{equation}
		We extend  the set $\{c_1,\dots,c_n\}$ to a $\Q-$linearly independent set of constants $\{c_1,\dots,c_n,\dots, c_l\}$ so that for each $1\leq i\leq n,$ $c_\tau c_i$ is a $\Q-$linear combination of $c_1,\dots, c_l.$ Now for $1\leq i\leq n$ and $1\leq j\leq l,$ we can find integers $m_{ij\tau}$ and a positive integer $l_\tau$ such that  $$-l_\tau c_\tau c_i=\sum^l_{j=1}m_{ij\tau }c_j.$$ 
		Then Equation (\ref{sigma+elementaryexpression}) becomes
		\begin{equation}\label{simplifiedsigma+elementaryexpression} c_1\mathrm{d}v_1/v_1+\cdots+c_l \mathrm{d}v_l/v_l+\mathrm{d}(l_\tau v_0)=0,\end{equation} where for
		$1\leq j\leq n,$ we have $$v_j=\frac{\tau(u_j^{l_\tau})}{\prod^n_{i=1}u^{m_{ij\tau}}_i}$$ and  for each $n+1 \leq i\leq l,$  $v_i$ is a power product of  $u_1,\dots, u_n, \tau(u_1),\dots, \tau(u_n)$ and $v_0=\tau(v)-c_\tau v.$ 
		
		Apply  Proposition \ref{zerodifferential} to Equation (\ref{simplifiedsigma+elementaryexpression}) to obtain that each $v_0,v_1,\dots, v_n$  belongs to $\overline{k}.$  Thus, for each $\tau\in \gal$ and for each $1\leq j\leq n,$ there are elements $f_{j\tau}$ and $g_\tau$ in $\overline{k}$ such that \begin{equation}\label{powerproduct-translate}\tau(u_j^{l_\tau})=f_{j\tau} \prod^n_{i=1}u^{m_{ij\tau}}_i\quad\text{and} \quad \tau(v)=c_\tau v+g_\tau.\end{equation} We claim that these equations hold only if each $u_1,\dots, u_n, v$ belongs to $k,$ which will then complete the proof of the lemma. As noted earlier,  $k$ is algebraically closed in $k(\theta,\theta').$ Since each $u_1,\dots, u_n, v$ belongs to $k(\theta, \theta'),$ the claim follows once we show that  each $u_1,\dots, u_n, v$ belongs to $\overline{k}.$  
		
		Suppose that for some $j$ we have $u_j\in \overline{k}(\theta,\theta')$ and $u_j\notin\overline{k}.$   Since $\mathcal E=ZY^2-4X^3+g_1Z^2X+g_0Z^3$ is an irreducible nonsingular projective curve, every non-constant rational function in $\overline{k}(\mathcal E)=\overline{k}(\theta,\theta')$ admits a zero and a pole.  Let $$T_j:=\{y\in \mathcal E(\overline{k})\ | \ y \text{ is a pole of }\ \tau(u_j)\ \text{ for some }\tau\in \gal\}.$$ Recall that $\gal$  consists of  automorphisms induced by the  translation maps $\tau_p$ for $p\in \mathcal E(C).$ Observe that  $\{\tau_p(x)=x+p\ | \ p\in \mathcal E(C)\}$ is an infinite set and that $x\in\mathcal E(\overline{k})$ is a pole of $u_j$ if and only if for each $p\in \mathcal E(C),$ $\tau_{-p}(x)=x-p$ is a pole of $\tau^*_p(u_j).$ Thus  $T_j$ is an infinite set.   
		
		For each $1\leq i\leq n,$ let $S_i$ be the finite set of all zeros and poles of $u_i.$ Since  $f_{j\tau}\in \overline{k}$ and $m_{ij\tau}$ are integers, the set of all poles of $f_{j\tau} \prod^n_{i=1}u^{m_{ij\tau}}_i$ is a subset of the finite set $\cup^n_{i=1} S_i.$ Since $l_\tau$ is a positive integer,  if $y\in T_j$  is a pole of $\tau(u_j)$ then $y$ is also a pole of $\tau(u^{l_\tau}_j).$ Therefore,  from Equation (\ref{powerproduct-translate}), we must have $T_j\subset \cup^n_{i=1} S_i,$ which contradicts the fact that $T_j$ is an infinite set.  Thus each $u_1,\cdots, u_n$ must  belong to $\overline{k}.$ Similarly, one shows that $v\in \overline{k}.$
	\end{proof}
	
\begin{proof}[Proof of Theorem \ref{IFT-elliptic}]  By an induction on $m,$ we shall assume that $f\in k$ has an elementary integral over $E_1.$  Depending on the type of the extension $E_1,$ we  shall apply \cite[Theorem]{Rosenlicht1968} or one of the  Lemmas \ref{dihedral-lemma}, \ref{sl_2-lemma}, \ref{weierstrassian-lemma} and  obtain that $f$ has an elementary integral over $k.$ \end{proof}

	\bibliographystyle{amsalpha} 
	\bibliography{KS}

\end{document}